\numberwithin{equation}{section}
\newtheorem{thm}[equation]{Theorem}
\newtheorem{prop}[equation]{Proposition}
\newtheorem{lemma}[equation]{Lemma}
\theoremstyle{remark}
\newtheorem*{remark}{Remark}
\newcommand{\F}{\mathbb{F}}
\newif\ifpdf
\begin{document}

\title{Some planar monomials in characteristic $2$}
 
\author{Zachary Scherr}
\address{
  Department of Mathematics,
  University of Pennsylvania,
  209 South 33rd Street,
  Philadelphia, PA 19104-6395 USA
}
\email{zscherr@math.upenn.edu}
\urladdr{http://www.math.upenn.edu/$\sim$zscherr/}

\author{Michael E. Zieve}
\address{
  Department of Mathematics,
  University of Michigan,
  530 Church Street,
  Ann Arbor, MI 48109-1043 USA
}
\address{Mathematical Sciences Center, Tsinghua University, Beijing 100084, China}
\email{zieve@umich.edu}
\urladdr{http://www.math.lsa.umich.edu/$\sim$zieve/}

\subjclass[2010]{51E20, 11G20, 11T06}

%\date{February 5, 2013}

\thanks{The authors were partially supported by NSF grant DMS-1162181.}
 
%#######################################################################
%#######################################################################

% \tableofcontents
\begin{abstract}
Planar functions over finite fields give rise to finite projective planes and other combinatorial objects.  They were originally defined  only in odd characteristic, but recently Zhou introduced a definition in even characteristic which yields similar applications.  In this paper we show that certain functions over $\F_{2^r}$ are planar, which proves
a conjecture of Schmidt and Zhou.
 The key to our proof is a new result about the $\F_{q^3}$-rational points on the curve $x^{q-1}+y^{q-1}=z^{q-1}$.
\end{abstract}

\maketitle

%#######################################################################
%#######################################################################
%#######################################################################

\section{Introduction}
Let $q=p^r$ where $p$ is prime and $r$ is a positive integer.
If $p$ is odd then a \textit{planar function} on $\F_q$ is a function
$F\colon\F_q\to\F_q$ such that, for every $d\in\F_q^*$, the function $c\mapsto F(c+d)-F(c)$ is a bijection on $\F_q$.
Planar functions have been used to construct finite projective planes \cite{DO},  
relative difference sets \cite{GS}, error-correcting codes \cite{CDY}, and
$S$-boxes in block ciphers with optimal resistance to differential
cryptanalysis \cite{NK}.

If $p=2$ then there are no functions $F\colon\F_q\to\F_q$ satisfying the defining property of a planar function, since $0$ and $d$ have the same image as one another under the map $c\mapsto F(c+d)-F(c)$.
Recently Zhou \cite{Zhou} introduced a characteristic $2$ analogue of planar functions, which have the same types of applications as do odd-characteristic planar functions.  These will be the focus of the present paper.  If $p=2$, we say that a function $F\colon\F_q\to\F_q$ is planar if, for every $d\in\F_q^*$, the function $c\mapsto F(c+d)+F(c)+dc$ is a bijection on $\F_q$.  Schmidt and Zhou \cite{SZ,Zhou} showed that any function satisfying this definition can be used to produce a relative difference set with parameters $(q,q,q,1)$, a finite projective plane, and certain codes with unusual properties.
In what follows, whenever we refer to a planar function in characteristic $2$, we mean a function satisfying Zhou's definition.

In the recent paper \cite{SZ}, Schmidt and Zhou studied planar functions in characteristic $2$ which have the form $c\mapsto a c^t$, where $a\in\F_q^*$.  They exhibited two classes of such planar monomials on $\F_q$ where $q=2^r$:
\begin{itemize}
\item $t=2^k$ for any $k\ge 0$, any $r$, and any $a\in\F_q^*$
\item $t=2^k+1$ where $r=2k$ and $a=b^2+b$ for any $b\in\F_{2^k}\setminus\F_2$.
\end{itemize}
They also conjectured \cite[Conj.~7]{SZ} that, when $r=6k$ and $t=4^k+16^k$, there exists $a\in\F_q^*$
such that $c\mapsto a c^t$ is planar on $\F_{2^r}$.  We will prove  this conjecture in the following more precise form:

\begin{thm} \label{main}
For any positive integer $k$, write $Q=4^k$ and $q=Q^3$.
If $a\in\F_q^*$ is a $(Q-1)$-th power but not a $3(Q-1)$-th power, then
the function $c\mapsto a c^{Q^2+Q}$ is planar on $\F_q$.
\end{thm}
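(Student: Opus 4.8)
The plan is to reduce planarity to the injectivity of a family of linearized maps, then to the nonvanishing of a single determinant, and finally to a factorization of a cubic form in which the arithmetic hypothesis on $a$ plays the decisive role. First I would record that $Q=2^{2k}$ is a power of $2$, so $\sigma\colon x\mapsto x^Q$ is the Frobenius generating $\Gal(\F_q/\F_Q)$, a cyclic group of order $3$ since $\F_q=\F_{Q^3}$. For $d\in\F_q^*$, expanding $(c+d)^{Q^2+Q}=(c^{Q^2}+d^{Q^2})(c^Q+d^Q)$ gives
\[ a(c+d)^{Q^2+Q}+ac^{Q^2+Q}+dc=L_d(c)+ad^{Q^2+Q},\qquad L_d(c):=dc+ad^{Q^2}c^Q+ad^Qc^{Q^2}. \]
Since $L_d$ is $\F_Q$-linear and the additive constant $ad^{Q^2+Q}$ is harmless, $c\mapsto ac^{Q^2+Q}$ is planar if and only if $L_d$ is injective for every $d\in\F_q^*$.

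Next, if $L_d(c)=0$ with $c\ne0$, then applying $\sigma$ and $\sigma^2$ shows that $(c,c^Q,c^{Q^2})$ is a nonzero kernel vector of the $\sigma$-twisted circulant $M$ whose first row is $(d,\,ad^{Q^2},\,ad^Q)$; hence it suffices to prove $\det M\ne 0$ for all $d\in\F_q^*$. The hypothesis that $a$ is a $(Q-1)$-th power is equivalent to $\mathrm{N}_{\F_q/\F_Q}(a)=1$, the $(Q-1)$-th powers being exactly the norm-one subgroup $\mu$ of order $Q^2+Q+1$. Using $\mathrm{N}(a)=1$, a direct expansion collapses the three norm terms and yields
\[ \det M=\mathrm{N}_{\F_q/\F_Q}(d)+\mathrm{Tr}_{\F_q/\F_Q}\!\left(\frac{d^3}{a}\right). \]
Because $3\mid Q-1$, write $a=\alpha^{Q-1}$, put $m=(Q-1)/3$ and $e=d\alpha^{-m}$; then $d^3/a=e^3$ and $\mathrm{N}(d)=\kappa\,\mathrm{N}(e)$ with $\kappa:=a^{(Q^2+Q+1)/3}=\alpha^{(q-1)/3}$, a cube root of unity in $\F_Q$. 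Crucially, the condition that $a$ is \emph{not} a $3(Q-1)$-th power says exactly $\kappa\ne 1$, so the goal becomes: $\mathrm{Tr}_{\F_q/\F_Q}(e^3)+\kappa\,\mathrm{N}_{\F_q/\F_Q}(e)\ne 0$ for all $e\in\F_q^*$.

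The crux is this last nonvanishing, and I expect it to be the main obstacle. Writing $e_i=\sigma^i(e)$, the expression equals $g(e_0,e_1,e_2)$ for the cubic $g=x^3+y^3+z^3+\kappa xyz$. Since $\kappa^3=1$ and $\F_Q$ contains a primitive cube root of unity $\omega$, the Hesse-type identity factors $g$ into three linear forms $x+\omega^{a_i}y+\omega^{b_i}z$, and one checks $a_i+b_i\equiv\epsilon\pmod 3$ for each $i$, where $\kappa=\omega^{\epsilon}$. Substituting $(e_0,e_1,e_2)=(e,e^Q,e^{Q^2})$, each factor becomes an $\F_Q$-linear operator $e\mapsto e+\omega^{a_i}e^Q+\omega^{b_i}e^{Q^2}$ whose determinant is the circulant value $1+\omega^{a_i+b_i}=1+\omega^{\epsilon}$, which is nonzero precisely because $\epsilon\not\equiv 0$, i.e.\ $\kappa\ne 1$. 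Hence no factor can vanish at a nonzero $e$, so $\det M\ne 0$ throughout $\F_q^*$ and every $L_d$ is injective. This is exactly where the hypothesis is sharp: when $\kappa=1$ the three operators become singular, producing nonzero kernels.

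Finally, the same step can be cast through the curve advertised in the abstract, and I would present it this way to match the paper's viewpoint. Dividing $L_d(c)=0$ by $a$ and factoring in characteristic $2$ gives $(c^Q)^{Q-1}+(d^Q)^{Q-1}=\bigl((\alpha cd)^{-1}\bigr)^{Q-1}$, so with $x=c^Q$, $y=d^Q$, $z=(\alpha cd)^{-1}$ a nonzero solution yields an $\F_{Q^3}$-point with nonzero coordinates on the Fermat-type curve $x^{Q-1}+y^{Q-1}=z^{Q-1}$ (and conversely $c=x^{Q^2}$, $d=y^{Q^2}$). The desired nonvanishing is then precisely the assertion that the relevant $\F_{Q^3}$-points of this curve do not occur under the hypothesis $\kappa\ne1$, which is the content of the key point-count for $x^{Q-1}+y^{Q-1}=z^{Q-1}$.
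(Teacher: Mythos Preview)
Your proof is correct and takes a genuinely different route from the paper's. The paper first normalizes $d=1$ (its Lemma~\ref{mono}) to reduce planarity to the nonexistence of $d,e\in\F_q^*$ with $d^{Q^2-1}+d^{Q-1}=a^{-1}e^{3(Q-1)}$, and then invokes its Theorem~\ref{fermat}: if $u^{Q-1}+v^{Q-1}=1$ in $\F_{Q^3}^*$ then $uv$ is a cube. That Fermat-curve result is proved separately by a page of minimal-polynomial and factorization analysis over $\F_Q$. You instead compute the Dickson determinant of $L_d$ directly as $\mathrm{N}(d)+\mathrm{Tr}(d^3/a)$, absorb $a$ by the substitution $e=d\alpha^{-(Q-1)/3}$ to obtain $\mathrm{Tr}(e^3)+\kappa\,\mathrm{N}(e)$ with $\kappa=a^{(Q^2+Q+1)/3}$, and then use the Hesse factorization of $x^3+y^3+z^3+\kappa xyz$ into three $\F_Q$-linear forms whose associated $Q$-linearized operators have circulant determinant $1+\omega^{\epsilon}\ne 0$ precisely because $\kappa\ne 1$. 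This bypasses Theorem~\ref{fermat} entirely and gives a shorter, self-contained argument in which the hypothesis on $a$ enters exactly once, at the nonvanishing of $1+\kappa$. What the paper's route buys is Theorem~\ref{fermat} as a standalone result of independent interest about $\F_{Q^3}$-points on $x^{Q-1}+y^{Q-1}=z^{Q-1}$; what your route buys is economy and a transparent link between the arithmetic hypothesis and the singular members of the Hesse pencil. One small remark: your final paragraph, recasting the computation through the Fermat curve, is a bit loose (the ``converse'' requires the extra constraint $(cdz)^{1-Q}=a$, and the paper's curve statement is not a nonexistence of points but the cube condition on $uv$); since your main argument via the Hesse factorization is already complete, you could either sharpen that paragraph into a genuine derivation of Theorem~\ref{fermat} from your nonvanishing, or simply omit it.
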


Our proof relies on the following result of independent interest:

\begin{thm} \label{fermat}
If $Q>1$ is a power of $2$, and $u,v\in\F_{Q^3}^*$ satisfy
$u^{Q-1}+v^{Q-1}=1$, then $uv$ is a cube in $\F_{Q^3}$.
\end{thm}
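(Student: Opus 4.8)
The plan is to reduce the cube condition to a statement about the norm-one subgroup and then exploit a symmetry of a cubic polynomial. First I would dispose of the easy case: writing $Q=2^m$, if $m$ is odd then $Q\equiv 2\pmod 3$, so $3\nmid Q^3-1$ and \emph{every} element of $\F_{Q^3}^*$ is a cube; thus I may assume $m$ is even, i.e.\ $Q\equiv 1\pmod 3$, so that $3\mid Q-1$ and $3\mid Q^2+Q+1$.

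Next I would set $x=u^{Q-1}$ and $y=v^{Q-1}$, so that $x+y=1$ with $x,y$ nonzero. Let $\mu\subset\F_{Q^3}^*$ be the group of $(Q^2+Q+1)$-th roots of unity, equivalently the kernel of the norm $N\colon\F_{Q^3}^*\to\F_Q^*$; then $x,y\in\mu$. Writing $uv=g^n$ for a generator $g$, we have $xy=(uv)^{Q-1}=g^{(Q-1)n}$, and the divisibility $3\mid Q^2+Q+1$ makes the assertion ``$uv$ is a cube in $\F_{Q^3}^*$'' equivalent to ``$xy$ lies in $\mu^3$,'' the unique index-$3$ subgroup of $\mu$. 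So it suffices to prove $xy\in\mu^3$, that is, that $xy$ is a cube \emph{inside} $\mu$.

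The heart of the argument is a symmetry of the characteristic polynomial $f(T)=\prod_{i=0}^2\bigl(T+x^{Q^i}\bigr)$ of $x$ over $\F_Q$. Writing $s=\operatorname{Tr}_{\F_{Q^3}/\F_Q}(x)$ and using $N(x)=1$ together with $N(1+x)=N(y)=1$ (which forces the elementary symmetric functions $e_i$ of the $x^{Q^i}$ to satisfy $e_3=1$ and $e_1+e_2=1$), I obtain $f(T)=T^3+sT^2+(s+1)T+1$. A direct expansion then yields the palindromic-shift identity $T^3 f(1/T)=f(T+1)$. Since $f$ has the nonzero roots $x^{Q^i}$ while $f(T+1)$ has roots $y^{Q^i}=1+x^{Q^i}$, comparing root sets gives $x^{-1}=y^{Q^a}$ for some $a$, hence $y=x^{-Q^b}$ for some $b$ and $xy=x^{1-Q^b}$. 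As $Q\equiv 1\pmod 3$ we have $3\mid 1-Q^b$, and any integer power of an element of $\mu$ with exponent divisible by $3$ lies in $\mu^3$; therefore $xy\in\mu^3$, as required.

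I expect the main obstacle to be discovering and justifying the identity $T^3 f(1/T)=f(T+1)$ and the ensuing matching of root sets. To make the latter rigorous one must check that $x$ has degree $1$ or $3$ over $\F_Q$ (degree $2$ being impossible inside $\F_{Q^3}$) and treat the degenerate degree-$1$ case separately, where $x\in\{\omega,\omega^2\}$ for a primitive cube root of unity $\omega$ and $xy=1$ outright; in the remaining case $f$ is genuinely the minimal polynomial with three distinct roots, so the reciprocal relation really does identify $x^{-1}$ with a Frobenius conjugate of $y$.
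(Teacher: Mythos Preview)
Your argument is correct and takes a genuinely different route from the paper's. Both proofs begin the same way: dispose of the case $Q\equiv 2\pmod 3$, pass to $x=u^{Q-1}$ and $y=v^{Q-1}$ (the paper writes $U,V$) in the norm-one subgroup $\mu$ of order $Q^2+Q+1$, reduce the cube condition on $uv$ to showing $xy\in\mu^3$, and handle $x\in\F_Q$ separately (where $xy=1$). From there the paper introduces an auxiliary parameter $b\in\F_Q$ by writing the minimal polynomial of $x$ as $F(T)=T(T+1)(T+b)+1$, computes the minimal polynomial $G$ of $xy=x^2+x$, exhibits three candidate factorizations of $G(T^3)$ indexed by cube roots of $b+\omega$, $b+\omega^2$, and $(b+\omega)(b+\omega^2)$, and then argues by cases that one of these three quantities must be a cube in $\F_Q$, so that the cube roots of $xy$ lie in $\mu$. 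Your approach replaces all of this with a single polynomial identity: from $N(x)=N(1+x)=1$ you obtain $f(T)=T^3+sT^2+(s+1)T+1$, and then $T^3f(1/T)=f(T+1)$ immediately yields $\{x^{-Q^i}\}=\{y^{Q^i}\}$, hence $xy=x^{1-Q^j}$ for some $j$, with $3\mid 1-Q^j$ since $Q\equiv 1\pmod 3$. This is considerably shorter, avoids the parameter $b$, the explicit factorizations, and the case split, and needs no computer check. One minor remark: your caution about the degree-$1$ case is unnecessary, since the identity $T^3f(1/T)=f(T+1)$ follows formally from the coefficient relations and so holds even when $f=(T+x)^3$, directly giving $y=x^{-1}$ there as well.
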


We note that, in subsequent work, Voloch and the second author \cite{VZ} have proved an analogous result for odd prime powers.

We conclude this introduction with some general remarks about planar monomials in characteristic $2$.
Since $c\mapsto ac^t$ is planar on $\F_q$ if and only if $c\mapsto ac^{t+q-1}$ is planar, we will restrict
to planar monomials of degree less than $q$.
All known planar monomials over $\F_{2^r}$ of degree between $2$ and $2^r-1$
have degree of the form $2^i+2^j$.
In light of this, we divide 
the classification of planar monomials in characteristic $2$ into two parts:
\begin{itemize}
\item monomials of degree $2^i+2^j$, and
\item monomials of other degrees.
\end{itemize}
We checked via computer that, for $r\le 14$, every planar monomial over $\F_{2^r}$ of degree between $2$ and $2^r-1$ has degree $2^i+2^j$.
We suspect that all planar monomials over $\F_{2^r}$ of degree $2^i+2^j$
with $0\le i<j<r$ are displayed in Theorem~\ref{main} and Proposition~\ref{odd}, but we cannot prove this.  We will present results in this direction in
Section~\ref{secquadratic},
after proving our main results in the next section.

%#######################################################################
%#######################################################################

\section{Proof of Theorem~\ref{main}}  \label{secmain}

In this section we prove Theorem~\ref{main}.  Our proof relies on Theorem~\ref{fermat}, which we prove first.

\begin{proof}[Proof of Theorem~\ref{fermat}]
If $Q=2^r$ where $r$ is odd, then $Q^3-1\equiv 1\pmod{3}$,
so every element of $\F_{Q^3}$ is a cube (since the map $c\mapsto c^3$ is a
homomorphism from $\F_{Q^3}^*$ to itself which has trivial kernel).
Thus the result holds in this case, so for the rest of this proof we
will assume that $Q$ is a power of $4$.  Let $\omega$ be a fixed primitive
cube root of unity in $\F_Q$.

Pick $u,v\in\F_{Q^3}^*$ such that
$u^{Q-1}+v^{Q-1}=1$.  
Write $U:=u^{Q-1}$ and $V:=v^{Q-1}$, so that $U+V=1$ and both $U$ and $V$ have order
dividing $Q^2+Q+1$.  We must show that $uv$ is a cube, or equivalently that
$UV$ is a $3(Q-1)$-th power in $\F_{Q^3}^*$.

We first dispense with the case that $U$ is in $\F_Q$.  If $U\in\F_Q$ then
$V=U+1$ is also in $\F_Q$, so both $U$ and $V$ have order dividing $Q-1$.
Thus the orders of $U$ and $V$ divide $Q^2+Q+1-(Q-1)(Q+2)=3$, so $U$ and $V$
are cube roots of unity whose sum is $1$.  It follows that $U$ and $V$ are
distinct primitive cube roots of unity, so their product is $1$, which is indeed
a $3(Q-1)$-th power.

Henceforth we assume that neither $U$ nor $V$ is in $\F_Q$.
Let $F(x)$ be the minimal polynomial of $U$ over $\F_Q$, so
$F(x)=(x+U)(x+U^Q)(x+U^{Q^2})$.  Then $F(0)=U^{Q^2+Q+1}=1$.
Since $F(x+1)$ is a monic irreducible polynomial in $\F_Q[x]$
which has $V$ as a root, it must be the minimal polynomial of $V$ over $\F_Q$.
Its constant term $F(1)$ equals $V^{Q^2+Q+1}=1$.  Therefore $F(x)+1$
is a monic degree-$3$ polynomial in $\F_Q[x]$ whose roots include $0$ and $1$, so
$F(x)+1=x(x+1)(x+b)$ for some $b\in\F_Q$.

Next we determine the minimal polynomial of $UV$ over $\F_Q$.
Note that $UV=U^2+U$.
Since $U$ has degree $3$ over $\F_Q$, we must have $U^2+U\not\in\F_Q$, so
that $U^2+U$ also has degree $3$ over $\F_Q$.  Thus, the minimal polynomial
of $UV$ over $\F_Q$ is the unique monic degree-$3$ polynomial in $\F_Q[x]$
which has $UV$ as a root.  This polynomial is $G(x):=x^3+(b^2+b)x^2+x+1$, since
\begin{align*}
G(U^2+U) &= (U^2+U)^3 + (b^2+b)(U^2+U)^2 + U^2+U+1 \\
  &= U^6 + U^5 + U^4 (b^2+b+1) + U^3 + U^2 (b^2+b+1) + U + 1 \\
  &= (U(U+1)(U+b) + 1) \cdot (U^3+bU^2+(b+1)U+1) \\
  &= F(U) \cdot (U^3+bU^2+(b+1)U+1) \\
  &= 0.
\end{align*}

Since $G(x)$ is the minimal polynomial of $UV$ over $\F_Q$, in particular it is
irreducible.  Thus $G(1)\ne 0$, so $b\notin\{\omega,\omega^2\}$.  We will write down the minimal polynomials for all of the cube roots of $UV$.  We first give three factorizations which might involve polynomials over an extension of $\F_Q$:

\begin{align*}
G(x^3) &= \prod_{e^3=b^2+b+1} (x^3+ex^2+1) \\
&= \prod_{e^3=b+\omega} (x^3+e^2 x^2 + ex + 1) \\
&= \prod_{e^3=b+\omega^2} (x^3+e^2 x^2 + ex + 1).
\end{align*}

We leave the easy verification of these factorizations to the reader; Magma code to verify
them is contained in a comment in the TeX file.
%################################################
\begin{comment}
Here is Magma code to verify these:

K<w>:=GF(4);
R<b,e>:=PolynomialRing(K,2);
_<x>:=PolynomialRing(R);
G:=x^3+(b^2+b)*x^2+x+1;
G3:=Evaluate(G,x^3);

G3 + &*[x^3+e*w^i*x^2+1: i in [0..2]] eq (e^3+b^2+b+1)*x^6;
G3 + &*[x^3+(e*w^i)^2*x^2+e*w^i*x+1: i in [0..2]] eq (e^3+b+w)*(e^3+b+w^2)*x^6;

\end{comment}
%################################################
First suppose that one of these three factorizations only involves polynomials in
$\F_Q[x]$.  Since the cubes of the roots of $G(x^3)$ have degree $3$ over $\F_Q$, it follows
that the degree-$3$ polynomials in this factorization are irreducible, so that
any root $d$ of any of these degree-$3$ polynomials must satisfy
$d^{Q^2+Q+1}=1$, and hence must be a $(Q-1)$-th power in $\F_{Q^3}^*$.
Since $G(UV)=0$, this implies that $UV$ is a $3(Q-1)$-th power, as desired.

It remains to show that one of the three factorizations above only involves
polynomials in $\F_Q[x]$.  Equivalently, we must show that one of
$b+\omega$, $b+\omega^2$, and $b^2+b+1$ is a cube in $\F_Q^*$.
If this did not happen then, since $b^2+b+1=(b+\omega)(b+\omega^2)$,
the only possibility is that $b+\omega$ and $b+\omega^2$ are in the same
coset in $\F_Q^*/(\F_Q^*)^3$.  But then $(b+\omega^2)/(b+\omega)$ would be in
$(\F_Q^*)^3$, so we could write $(b+\omega^2)/(b+\omega)=e^3$ with $e\in\F_Q^*$.
Now one can easily verify that $d:=(b^2+b+1)e^2+(b+\omega)^2e+b^2+b$
is a root of $G(x)$ in $\F_Q$ (for instance, see the Magma code in the TeX file),
%################################################
\begin{comment}
Here is Magma code to verify this:

K<w>:=GF(4);
R<b,e>:=PolynomialRing(K,2);
_<x>:=PolynomialRing(R);
G:=x^3+(b^2+b)*x^2+x+1;
d:=(b^2+b+1)*e^2+(b+w)^2*e+b^2+b;
IsDivisibleBy(Evaluate(G,d), (b+w^2)+e^3*(b+w));

\end{comment}
%################################################
which is impossible.
\end{proof}

Next we prove a lemma expressing the planarity condition for monomials in a
convenient form.

\begin{lemma} \label{mono}
Pick any positive integer $t$ and any $a\in\F_q^*$, where $q>1$ is a power of $2$.
Then $c\mapsto ac^t$ is a planar function on $\F_q$ if and only if, for each $b\in\F_q^*$,
the function $c\mapsto (c+1)^t+c^t+c a^{-1}b^{t-2}$ is bijective on\/ $\F_q$.
\end{lemma}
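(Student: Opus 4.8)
The plan is to reduce the planarity condition directly to the stated bijectivity condition by a linear change of variables, exploiting that we are in characteristic $2$. By definition, $c\mapsto ac^t$ is planar on $\F_q$ exactly when, for every $d\in\F_q^*$, the map
\[
L_d\colon c\longmapsto a(c+d)^t+ac^t+dc
\]
is a bijection on $\F_q$. Since $d\ne 0$, the substitution $c=dc'$ is itself a bijection of $\F_q$, so $L_d$ is bijective if and only if its composite with $c'\mapsto dc'$ is bijective. I would compute this composite and factor out the common scalar $ad^t$, which is nonzero and hence gives yet another bijection of $\F_q$.

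The key computation is the identity
\[
a(dc'+d)^t+a(dc')^t+d(dc')=ad^t\bigl[(c'+1)^t+(c')^t+a^{-1}d^{2-t}c'\bigr],
\]
obtained by writing $(dc'+d)^t=d^t(c'+1)^t$ and $(dc')^t=d^t(c')^t$, using $d\cdot dc'=d^2c'$, and noting $d^2/(ad^t)=a^{-1}d^{2-t}$. Because multiplication by the nonzero constant $ad^t$ is a bijection of $\F_q$, it follows that $L_d$ is bijective if and only if the map $c'\mapsto (c'+1)^t+(c')^t+a^{-1}d^{2-t}c'$ is bijective.

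Finally I would reparametrize by setting $b=d^{-1}$, which is a bijection of $\F_q^*$ onto itself, so that quantifying over all $d\in\F_q^*$ is the same as quantifying over all $b\in\F_q^*$. Since $d^{2-t}=(d^{-1})^{t-2}=b^{t-2}$, renaming $c'$ as $c$ produces exactly the function $c\mapsto (c+1)^t+c^t+ca^{-1}b^{t-2}$ of the statement, and both directions of the equivalence follow at once because every substitution used is a genuine bijection. This argument is essentially bookkeeping; the only points requiring care are the exponent arithmetic in passing from $d^{2-t}$ to $b^{t-2}$ and the verification that each change of variables preserves bijectivity, so that the implication holds in both directions.
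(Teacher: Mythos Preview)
Your proof is correct and follows essentially the same approach as the paper: the paper composes the planarity map $F$ with the bijections $\ell_2\colon c\mapsto dc$ and $\ell_1\colon c\mapsto c/(ad^t)$ and then sets $b=1/d$, which is exactly your substitution $c=dc'$ followed by factoring out $ad^t$ and reparametrizing $b=d^{-1}$. Your write-up simply spells out the exponent bookkeeping in more detail.
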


\begin{proof}
Planarity asserts that, for each $d\in\F_q^*$, the function $F\colon c\mapsto a(c+d)^t+ac^t+dc$
is bijective on\/ $\F_q$.  Equivalently, $\ell_1\circ f\circ\ell_2$ is bijective, where
$\ell_1\colon c\mapsto c/(a d^t)$ and $\ell_2\colon c\mapsto dc$.  Since
$\ell_1\circ f\circ\ell_2\colon c\mapsto (c+1)^t+c^t+c/(a d^{t-2})$, putting $b=1/d$ yields the
result.
\end{proof}

We now prove Theorem~\ref{main}.

\begin{proof}[Proof of Theorem~\ref{main}]
Let $k$ be a positive integer, and write $Q=4^k$ and $q=Q^3$.  Let $a\in\F_q^*$ be a
$(Q-1)$-th power which is not a $3(Q-1)$-th power.

We first show that no $d,e\in\F_q^*$ satisfy $d^{Q^2-1}+d^{Q-1}=a^{-1} e^{3Q-3}$.
For, suppose $d,e\in\F_q^*$ satisfy this equation.
Since $a$ is a $(Q-1)$-th power, also $d^{Q^2-1}+d^{Q-1}$ is a $(Q-1)$-th
power in $\F_q^*$.  But $d^{Q^2-1}+d^{Q-1}=d^{Q-1}(d^{Q-1}+1)^Q$, so we must have
$d^{Q-1}+1=u^{Q-1}$ for some $u\in\F_q^*$.  Now Theorem~\ref{fermat} implies that
$du$ is a cube in $\F_q^*$, so $d^{Q-1}u^{Q^2-Q}= (du)^{Q-1} (u^{(Q-1)/3})^{3Q-3}$
is a $3(Q-1)$-th power; but this expression equals $a^{-1} e^{3Q-3}$, which contradicts
our hypothesis that $a$ is not a $3(Q-1)$-th power.

Thus, for each $e\in\F_q^*$, the polynomial $x^{Q^2-1}+x^{Q-1}+a^{-1}e^{3Q-3}$
has no roots in $\F_q^*$.  Since the function $c\mapsto c^{Q^2}+c^Q+ca^{-1}e^{3Q-3}$
is a homomorphism from the additive group of $\F_q$ to itself, and the kernel of this
homomorphism is trivial,  the function is a bijection on $\F_q$.
Since $(c+1)^{Q^2+Q}+c^{Q^2+Q}=c^{Q^2}+c^Q+1$, this implies that
$c\mapsto (c+1)^{Q^2+Q}+c^{Q^2+Q}+ca^{-1}e^{3Q-3}$ is a bijection on $\F_q$.
For any $b\in\F_q^*$, put $e=b^{(Q+2)/3}$, so that $e^{3Q-3}=b^{Q^2+Q-2}$.
We have shown that $c\mapsto (c+1)^{Q^2+Q}+c^{Q^2+Q}+ca^{-1}b^{Q^2+Q-2}$ is a bijection
on $\F_q$, which by Lemma~\ref{mono} implies that $c\mapsto ac^{Q^2+Q}$ is planar on $\F_q$.
\end{proof}

%#######################################################################
%#######################################################################

\section{Planar monomials of degree $2^i+2^j$}  \label{secquadratic}

We conclude this paper with some general remarks about planar polynomials in
characteristic $2$, and in particular about planar monomials of degree $2^i+2^j$.
Here we say that $F(x)\in\F_q[x]$ is planar if the function $c\mapsto F(c)$ is
a planar function on $\F_q$.  Since every function $\F_q\to\F_q$ is represented
by a polynomial, in particular every planar function is represented by a planar
polynomial.  Moreover, if $F,G\in\F_q[x]$ satisfy $F(x)\equiv G(x)\pmod{x^q-x}$ then
$F(x)$ is planar if and only if $G(x)$ is planar.  Thus, in order to classify
planar polynomials, it suffices to classify their residues mod $(x^q-x)$, or equivalently,
to classify planar polynomials of degree less than $q$.

If $p$ is any (odd or even) prime other than $3$, then (for every $r$) all
known planar polynomials over $\F_{p^r}$ of degree less than $p^r$ have the
property that the degree of every term is the sum of at most two powers of $p$.
The prime $p=3$ must be excluded, due to examples from \cite{CM}.
Planar polynomials in which every term has degree the sum of at most two powers of $p$
are especially interesting, since a classification of such polynomials would be
equivalent to a classification of finite commutative semifields \cite{CH}.
To date, there is no $p$ for which there is even a conjectured classification 
of such planar polynomials.  However, if we restrict to monomials then the situation
becomes more tractable.  It is easy to see that a monomial of degree $p^i$
is planar over $\F_{p^r}$ if and only if $p=2$.  Further, it is well-known (and easy to
prove) that, for any odd prime $p$
and any $0\le i\le j<r$, a monomial over $\F_{p^r}$ of degree $p^i+p^j$ is planar
if and only if $r/\gcd(r,j-i)$ is odd.  Our Theorem~\ref{main} shows that the
analogous assertion for $p=2$ is not true, and the complexity of our proof suggests
that a classification of planar monomials over $\F_{2^r}$ of degree $2^i+2^j$ will
likely be difficult to obtain.  Here we make some remarks about this classification.

We first reformulate the planarity condition for these monomials:

\begin{lemma} \label{mono2}
Fix $0\le i<j<r$ and $a\in\F_{2^r}^*$, and put $q=2^r$ and $G(x):=x^{2^i-1}+x^{2^j-1}$.
The monomial $ax^{2^i+2^j}$ is planar on\/ $\F_q$
if and only if the sets $G(\F_q^*)$ and $a^{-1}(\F_q^*)^{t-2}$ are disjoint.
\end{lemma}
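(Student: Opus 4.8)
The plan is to deduce this from Lemma~\ref{mono} by exploiting the additive structure that appears in characteristic $2$. Set $t := 2^i + 2^j$. By Lemma~\ref{mono}, $ax^t$ is planar on $\F_q$ if and only if, for every $b \in \F_q^*$, the map $c \mapsto (c+1)^t + c^t + ca^{-1}b^{t-2}$ is a bijection on $\F_q$. So the first step is to simplify $(c+1)^t + c^t$. Since we are in characteristic $2$ and $t = 2^i + 2^j$, we have $(c+1)^t = (c^{2^i}+1)(c^{2^j}+1) = c^t + c^{2^i} + c^{2^j} + 1$, whence $(c+1)^t + c^t = c^{2^i} + c^{2^j} + 1$. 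Thus the map in question is $c \mapsto L(c) + 1$, where $L(c) := c^{2^i} + c^{2^j} + ca^{-1}b^{t-2}$.

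Next I would observe that $L$ is $\F_2$-linear: each of the three summands is additive (the first two because the Frobenius is additive, the third because it is $\F_q$-linear), so $L(c+c') = L(c) + L(c')$. Translating by the constant $1$ does not affect bijectivity, so $c \mapsto L(c)+1$ is bijective exactly when $L$ is. Because $\F_q$ is a finite-dimensional $\F_2$-vector space, the linear map $L$ is bijective if and only if it is injective, i.e. if and only if its kernel is trivial.

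The key computation is then to identify the nonzero kernel elements of $L$. For $c \in \F_q^*$, the equation $L(c) = 0$ reads $c^{2^i} + c^{2^j} = ca^{-1}b^{t-2}$; dividing by $c$ gives $c^{2^i-1} + c^{2^j-1} = a^{-1}b^{t-2}$, that is, $G(c) = a^{-1}b^{t-2}$. Hence $L$ has a nonzero kernel element precisely when $a^{-1}b^{t-2} \in G(\F_q^*)$, so for this fixed $b$ the map is bijective exactly when $a^{-1}b^{t-2} \notin G(\F_q^*)$. Finally I would let $b$ range over $\F_q^*$: the monomial is planar iff the map is bijective for every $b$, i.e. iff $a^{-1}b^{t-2} \notin G(\F_q^*)$ for all $b \in \F_q^*$. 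As $b$ ranges over $\F_q^*$, the element $a^{-1}b^{t-2}$ ranges over $a^{-1}(\F_q^*)^{t-2}$, so this is exactly the disjointness of $G(\F_q^*)$ and $a^{-1}(\F_q^*)^{t-2}$, as claimed.

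Every step here is elementary, and I do not anticipate a genuine obstacle. The only points requiring a little care are the characteristic-$2$ expansion of $(c+1)^t$ and the legitimacy of dividing by $c$, which is unproblematic since we work with $c \in \F_q^*$; the reduction from \emph{bijective} to \emph{trivial kernel} uses only finiteness together with the $\F_2$-linearity of $L$.
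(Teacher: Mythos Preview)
Your proof is correct and follows essentially the same route as the paper: invoke Lemma~\ref{mono}, expand $(c+1)^t+c^t$ in characteristic $2$ to get the $\F_2$-linear map $c\mapsto c^{2^i}+c^{2^j}+ca^{-1}b^{t-2}$, and observe that bijectivity is equivalent to having no nonzero root, which upon dividing by $c$ becomes $G(c)\ne a^{-1}b^{t-2}$. The only cosmetic difference is that the paper factors $F(x)+1=x(G(x)+a^{-1}b^{t-2})$ in one line rather than phrasing it as a kernel computation.
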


\begin{proof}
Write $q=2^r$ and $t=2^i+2^j$.
By Lemma~\ref{mono}, planarity asserts that, for each $b\in\F_q^*$, the polynomial
$F(x):=(x+1)^t+x^t+xa^{-1}b^{t-2}$ induces a bijection on $\F_q$.  Equivalently,
$F(x)+1$ induces a bijection on $\F_q$, and we compute
\[
F(x)+1 = x^{2^i} + x^{2^j} + x a^{-1} b^{t-2}.
\]
Since this polynomial induces a homomorphism from the additive group of $\F_q$ to itself,
it induces a bijection if and only if it has no nonzero roots in $\F_q$.  
Since $F(x)+1=x(G(x)+a^{-1}b^{t-2})$, the result follows.
\end{proof}

Next we determine all planar monomials of degree $1+2^j$:

\begin{prop} \label{odd}
Pick $0<j<r$ and $a\in\F_{2^r}^*$, and let $T(x):=x^{2^{j-1}}+x^{2^{j-2}}+\dots+x$.
Then $ax^{1+2^j}$ is planar on $\F_{2^r}$
if and only if $j=r/2$ and $T(a^{2^j+1})=0$.
\end{prop}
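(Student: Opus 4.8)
The plan is to apply Lemma~\ref{mono2} with $i=0$. Writing $q=2^r$ and $t=1+2^j$, we have $G(x)=1+x^{2^j-1}$ and $t-2=2^j-1$, so $ax^{1+2^j}$ is planar if and only if the sets $1+K$ and $a^{-1}K$ are disjoint, where $K:=(\F_q^*)^{2^j-1}$. The first step is to record the structure of $K$: setting $g:=\gcd(j,r)$, one has $\gcd(2^j-1,2^r-1)=2^g-1$, so $K=(\F_q^*)^{2^g-1}$ is exactly the kernel of the norm map $\operatorname{N}\colon\F_q^*\to\F_{2^g}^*$, $\operatorname{N}(x)=x^{(q-1)/(2^g-1)}$, and the $2^g-1$ cosets of $K$ are indexed by their norm. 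Since $0\in 1+K$ but $0\notin a^{-1}K$, disjointness of $1+K$ and $a^{-1}K$ is equivalent to $\operatorname{N}(a)^{-1}\notin S$, where $S:=\{\operatorname{N}(1+h):h\in K,\ h\neq 1\}\subseteq\F_{2^g}^*$ is the set of norms attained on $1+K$. Thus everything reduces to understanding $S$, and the dichotomy in the statement corresponds to $n:=r/g$ being $2$ or $\ge 3$ (note $n=2\iff j=r/2$).

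When $j=r/2$ we have $g=j$ and $n=2$, so $K$ is the group $\mu_{2^j+1}$ of $(2^j+1)$-th roots of unity; for $h\in K$ we have $h^{2^j}=h^{-1}$, whence $\operatorname{N}(1+h)=(1+h)(1+h^{2^j})=(1+h)(1+h^{-1})=h+h^{-1}$. I would then identify the image directly: for $w\in\F_{2^j}^*$, writing $w=h+h^{-1}$ amounts to $h$ being a root of $x^2+wx+1$, and the root lies in $K=\mu_{2^j+1}$ precisely when this quadratic is irreducible over $\F_{2^j}$ (so that Frobenius swaps its two reciprocal roots). The standard criterion gives irreducibility iff $\operatorname{Tr}_{\F_{2^j}/\F_2}(1/w)=1$, so $S=\{w\in\F_{2^j}^*:\operatorname{Tr}_{\F_{2^j}/\F_2}(1/w)=1\}$ and hence $\operatorname{N}(a)^{-1}\notin S$ iff $\operatorname{Tr}_{\F_{2^j}/\F_2}(\operatorname{N}(a))=0$. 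Finally $r=2j$ forces $a^{2^j+1}=\operatorname{N}(a)\in\F_{2^j}$, on which $T$ restricts to $\operatorname{Tr}_{\F_{2^j}/\F_2}$, so this is exactly $T(a^{2^j+1})=0$, yielding the stated criterion in this case.

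It remains to show that when $j\neq r/2$, i.e. $n\ge 3$, the monomial is never planar, equivalently $S=\F_{2^g}^*$ so that $1+K$ meets every coset of $K$. The case $g=1$ is trivial (there is a single coset). For $g\ge 2$ I would transfer the problem to the subfield: via Hilbert~90 (equivalently, Hasse--Davenport applied to the relevant character sums over $\F_{2^g}^*$), the number of $h\in K$ with $\operatorname{N}(1+h)=\nu$ is governed by the number of tuples $(s_1,\dots,s_n)\in(\F_{2^g}\setminus\{0,1\})^n$ satisfying $\prod_i s_i=1$ and $\prod_i(1+s_i)=\nu$, i.e. by the $\F_{2^g}$-points of a variety of dimension $n-2\ge 1$. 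The cleanest route is to prove the base case $n=3$ --- that for every $\beta,\nu\in\F_{2^g}^*$ the curve $s_1s_2s_3=\beta,\ (1+s_1)(1+s_2)(1+s_3)=\nu$ has a point with all $s_i\neq 0,1$ --- and then bootstrap to $n\ge 4$ by an easy induction (fix one coordinate and vary the rest, using that the base statement holds for \emph{all} $\beta,\nu$). The base case is the crux and the expected main obstacle: it is a curve of bounded genus, so Weil's bound produces points once $2^g$ exceeds an absolute constant, leaving only a finite list of small fields to dispatch by direct verification. The delicate point is exactly that naive Weil or character-sum estimates are only comfortably positive for $n\ge 5$; the low cases $n=3,4$ sit near the bound and force this separate curve analysis, together with a check of geometric irreducibility for the finitely many exceptional $(\beta,\nu)$, rather than a one-line estimate.
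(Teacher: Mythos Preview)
Your treatment of the case $j=r/2$ is correct and in fact more self-contained than the paper's: the paper simply cites Moisio for the characterization of when the twisted Fermat curve over a quadratic extension has a point with nonzero coordinates, whereas your direct computation of $S$ via the irreducibility criterion for $x^2+wx+1$ gives the trace condition cleanly.

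The $n\ge 3$ case, however, has a genuine gap. The asserted transfer to the tuple problem over $\F_{2^g}$ is not correct as stated: if one actually carries out the character-sum computation and applies the Hasse--Davenport relation for Jacobi sums, one obtains
\[
\#\{h\in K\setminus\{1\}:\operatorname{N}(1+h)=\nu\}
= \frac{(q-2)-(-1)^{n-1}(2^g-2)^n}{(2^g-1)^2}
+ (-1)^{n-1}\cdot(\text{tuple count}),
\]
so the tuple count enters with a sign and an additive shift, not as an equivalent problem. Concretely, for $g=2$ one has $s_i\in\{\omega,\omega^2\}$ and $1+\omega=\omega^2$, $1+\omega^2=\omega$; the constraint $\prod s_i=1$ then forces $\prod(1+s_i)=1$, so your tuple count vanishes for $\nu\in\{\omega,\omega^2\}$ for \emph{every} $n$. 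Thus your base-case claim ``for every $\beta,\nu$ the curve $s_1s_2s_3=\beta$, $(1+s_1)(1+s_2)(1+s_3)=\nu$ has a point with all $s_i\ne 0,1$'' is already false over $\F_4$, and since $g=2$ arises for infinitely many $r$ this cannot be patched by checking small fields. (Meanwhile the original count is positive---e.g.\ it equals $6$ when $n=3$, $\nu=\omega$---precisely because of the additive term.) The paper avoids this detour entirely: for $J\le r/4$, i.e.\ $n\ge 4$, it applies the Weil bound directly to the Fermat curve $z^{2^J-1}+x^{2^J-1}=a^{-1}y^{2^J-1}$ of genus $(2^J-2)(2^J-3)/2$ over $\F_q$, and for $J=r/3$ it invokes Moisio's exact result on such curves over cubic extensions.
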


\begin{proof}
Let $q=2^r$ and $J=\gcd(j,r)$.
By Lemma~\ref{mono2}, $ax^{1+2^j}$ is planar on $\F_q$ if and only if
there do not exist $u,v\in\F_q^*$ such that
\[
1+u^{2^j-1} = a^{-1} v^{2^j-1}.
\]
Since the set of $(2^j-1)$-th powers in $\F_q^*$ is the same as the set of
$\gcd(2^j-1,2^r-1)$-th powers, i.e., the $(2^J-1)$-th powers, planarity asserts
that the equation $1+U^{2^J-1} = a^{-1} V^{2^J-1}$ has no solutions with
$U,V\in\F_q^*$.  Note that the equation $z^{2^J-1}+x^{2^J-1}=a^{-1}y^{2^J-1}$
defines a nonsingular curve $C$ in $\mathbb{P}^2$ of genus
$g:=(2^J-2)(2^J-3)/2$.  Planarity asserts that $C$ has no $\F_q$-rational points
with nonzero coordinates.

First suppose $J\le r/4$.  Then
Weil's bound implies that the number of $\F_q$-rational points on $C$ is at least
\begin{align*}
q+1-2g\sqrt{q} &= q+1-(2^J-2)(2^J-3)\sqrt{q} \\
&\ge q+1-(q^{1/4}-2)(q^{1/4}-3)\sqrt{q} \\
&= 1 + (5q^{1/4}-6)\sqrt{q}.
\end{align*}
At most $3(2^J-1)$ of these points have a coordinate being zero.
Since 
\[
1+(5q^{1/4}-6)\sqrt{q} > 3\sqrt{q} > 3(2^J-1),
\]
there is a point in $C(\F_q)$ with nonzero coordinates, so $ax^{1+2^j}$ is not
 planar on $\F_q$.

Since $J$ is a proper divisor of $r$, the remaining cases are $J=r/2$ and $J=r/3$.
If $J=r/2$ then $j=r/2$, and by \cite[Thm.~1]{Moisio},
there is a point in $C(\F_q)$ with nonzero coordinates if and only if
the canonical additive character $\chi$ on $\F_{2^J}$ satisfies
$\chi(a^{2^J+1})\ne 1$, or equivalently $T(a^{2^J+1})\ne 0$.
If $J=r/3$ then, by \cite[Thm.~2]{Moisio}, the curve $C$ has an $\F_q$-rational
point with nonzero coordinates.
\end{proof}

\begin{remark}
If $j=r/2$ then the condition $T(a^{2^j+1})=0$ can be reformulated as
asserting that $a^{2^j+1}=b^2+b$ for some $b\in\F_{2^j}$.
The number of elements $a\in\F_{2^r}^*$ which satisfy this condition is
$(2^{j-1}-1)(2^j+1)$.  Among these are precisely $2^{j-1}-1$ elements of $\F_{2^j}^*$,
namely, the elements of the form $b^2+b$ with $b\in\F_{2^j}\setminus\F_2$.  Planarity of $ax^{1+2^j}$ for these latter $a$'s was shown in \cite[Thm.~6]{SZ}.
\end{remark}

Since the $(t-2)$-th powers in $\F_q^*$ are precisely the $\gcd(t-2,q-1)$-th powers,
Lemma~\ref{mono2} implies that $ax^{2^i+2^j}$ cannot be planar on $\F_q$
if $\gcd(2^i+2^j-2,q-1)=1$.  This is a special case of \cite[Prop.~9]{SZ}.
It follows that there are no planar monomials over $\F_{2^r}$ of degree $2+2^j$
if $1<j<r$.  But we have not gotten much further in determining which monomials 
of degree $2^i+2^j$ are planar.  In particular, we do not know whether there are any
other planar monomials over $\F_{Q^3}$ of degree $Q+Q^2$, besides the ones
described in Theorem~\ref{main}.  However, we did verify via computer that
the planar monomials in Theorem~\ref{main} and Proposition~\ref{odd} are
the only planar monomials over $\F_{2^r}$ of degree $2^i+2^j$ with $0\le i<j<r\le 50$.

%#######################################################################
%#######################################################################
%#######################################################################

\end{document}